\newtheorem{theorem}{Theorem}
\newtheorem*{theorem*}{Theorem}
\newtheorem{proposition}[theorem]{Proposition}
\newtheorem{claim}[theorem]{Claim}
\newtheorem{maintheorem}{Theorem}
\theoremstyle{definition}
\newtheorem*{definition*}{Definition}
\newtheorem*{lemma*}{Lemma}
\numberwithin{equation}{section}
\numberwithin{theorem}{section}
\newcommand{\R}{\mathbb{R}}
\newcommand{\N}{\mathbb{N}}
\newcommand{\Z}{\mathbb{Z}}
\DeclareDocumentCommand\Pr{ m g }{\ensuremath{
    {   \IfNoValueTF {#2}
      {\mathbb{P}\mleft[{#1}\mright]}
      {\mathbb{P}\mleft[{#1}\middle\vert{#2}\mright]}%
    }
}}
\DeclareDocumentCommand\E{ m g }{\ensuremath{
    {   \IfNoValueTF {#2}
      {\mathbb{E}\mleft[{#1}\mright]}
      {\mathbb{E}\mleft[{#1}\middle\vert{#2}\mright]}%
    }
}}
\def\dd{\mathrm{d}}
\def\aut{\mathrm{Aut}}
\def\hom{\mathrm{Homeo}}
\begin{document}

\title[]{Homomorphisms to $\mathbb{R}$ of automorphism groups of zero entropy shifts}

\author[]{Omer Tamuz}
\address{California Institute of Technology}


\thanks{Email: tamuz@caltech.edu. The author was supported by a grant from the Simons
Foundation (\#419427), a Sloan fellowship, a BSF award (\#2018397) and a National Science Foundation
CAREER award (DMS-1944153)}
\date{\today}

\begin{abstract}
We show that the automorphism group of every zero entropy infinite shift admits a ``drift'' homomorphism to $(\mathbb{R},+)$ that maps the shift map to 1. This homomorphism arises as the expectation, under an invariant measure, of a cocycle defined on a space of asymptotic pairs.
\end{abstract}

\maketitle
\section{Introduction}

Automorphism groups of low complexity shifts have attracted much attention in the past few years (see, e.g.,~\cite{cyr2015automorphism1,cyr2015automorphism,coven2015automorphisms,cyr2014automorphism, donoso2015automorphism, salo2014toeplitz, salo2014block}), and this paper builds on this work. We recall the basic definitions and state our main result.  

Let $A$ be a finite set called an \emph{alphabet}. The set $A^\Z$, endowed with the product topology, is called the \emph{full shift}. Let $\sigma \colon A^\Z \to A^Z$ denote the \emph{shift map}, given by $[\sigma(x)]_n = x_{n-1}$. A closed, $\sigma$-invariant subset of $A^\Z$ is called a \emph{shift}. An infinite shift can be either countable or uncountable.


Let $\Sigma \subseteq A^\Z$ be a shift.
A \emph{word} of length $n$ in $\Sigma$  is an element $w \in A^n$ such that $w = (x_1,\ldots,x_n)$ for some $x \in \Sigma$. The number of words of length $n$ in $\Sigma$ is denoted by $P_\Sigma(n)$. The entropy of $\Sigma$ is given by
\begin{align*}
    h(\Sigma) = \lim_n \frac{1}{n}\log P_\Sigma(n). 
\end{align*}

The automorphism group of a shift $\Sigma$, denoted $\aut(\Sigma)$, is the group of  homeomorphisms  of $\Sigma$ that commute with $\sigma$. Note that $\sigma$ (or, more precisely, its restriction to $\Sigma$) is an element of $\aut(\Sigma)$.

Our main result shows that when $\Sigma$ is zero entropy and infinite, then $\aut(\Sigma)$ is \emph{indicable}: it admits a non-trivial homomorphism to the additive group $(\R,+)$.
\begin{maintheorem}
\label{thm:main}
Let $\Sigma$ be a zero entropy infinite shift. Then there exists a group homomorphism $\Phi \colon \aut(\Sigma) \to \R$ such that $\Phi(\sigma) = 1$.
\end{maintheorem}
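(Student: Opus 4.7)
Following the blueprint in the abstract, the plan is to introduce a space of asymptotic pairs, define a natural cocycle that sends $\sigma$ to $1$, and integrate this cocycle against a suitable invariant measure.

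Call $(x,y) \in \Sigma \times \Sigma$ an asymptotic pair if $x \neq y$ but $D(x,y) := \{n \in \Z : x_n \neq y_n\}$ is finite. A classical result in the tradition of Boyle--Lind--Rudolph guarantees that every zero entropy infinite shift admits such a pair. Let $E \subseteq \Sigma\times\Sigma$ denote the set of asymptotic pairs; the diagonal actions of both $\sigma$ and $\aut(\Sigma)$ preserve $E$. Set $m(x,y) := \min D(x,y)$ and define the drift cocycle
\begin{equation*}
c(\phi, (x,y)) := m(\phi x, \phi y) - m(x, y).
\end{equation*}
Using that every $\phi \in \aut(\Sigma)$ commutes with $\sigma$, one checks directly that $c$ satisfies the group cocycle identity $c(\phi\psi, z) = c(\phi, \psi z) + c(\psi, z)$, that $c(\sigma, \cdot) \equiv 1$, and that for each fixed $\phi$ the function $c_\phi := c(\phi, \cdot)$ is $\sigma$-invariant on $E$: shifting $(x,y)$ by $\sigma$ translates both $D(x,y)$ and $D(\phi x, \phi y)$ by $1$, and the differences cancel.

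I would then produce a $\sigma$-invariant probability measure $\mu$ on a suitable compactification of $E$ that does not charge the diagonal, and set $\Phi(\phi) := \int c_\phi \, d\mu$. Since each $\psi \in \aut(\Sigma)$ commutes with $\sigma$, the pushforward $\psi_* \mu$ is again $\sigma$-invariant, and the cocycle identity reduces the homomorphism property $\Phi(\phi\psi) = \Phi(\phi)+\Phi(\psi)$ to the equality $\int c_\phi \, d(\psi_* \mu) = \int c_\phi \, d\mu$. Because $c_\phi$ is $\sigma$-invariant and $\Z$-valued, on each $\sigma$-ergodic component it is a.s.\ a constant $k(\phi)$; the required identity is equivalent to saying this constant is intrinsic to $\phi$ rather than depending on the chosen ergodic component. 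The normalization $\Phi(\sigma) = 1$ is then immediate from $c(\sigma,\cdot) \equiv 1$.

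The main obstacle is precisely the construction of $\mu$ together with the proof that $k(\phi)$ is intrinsic to $\phi$. Two difficulties compound here. First, for any asymptotic pair $(x,y)$ the iterates $\sigma^n(x,y)$ tend to the diagonal as $|n| \to \infty$, so the orbit closure of a single pair contains no useful invariant mass away from $\Delta$, and the measure construction is not a routine Krylov--Bogolyubov exercise. Second, $\aut(\Sigma)$ is typically nonamenable, so an $\aut(\Sigma)$-invariant measure on $E$ cannot be produced by soft averaging. I expect the zero entropy hypothesis to enter essentially at this point---perhaps through an $\aut(\Sigma)$-equivariant lift of a $\sigma$-invariant measure on $\Sigma$ to the asymptotic relation, or via enough rigidity of the asymptotic pair system to force all ergodic components to record the same drift.
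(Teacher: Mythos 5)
Your cocycle is essentially the one the paper uses (there, on the space of \emph{calibrated} pairs, $c(\varphi,(x,y))=M(\varphi x,\varphi y)$ with $M$ the first coordinate of disagreement), and the reduction ``invariant measure $+$ bounded cocycle $\Rightarrow$ homomorphism with $\Phi(\sigma)=1$'' is correct. But there are two genuine problems. First, your existence claim is wrong as stated: requiring $D(x,y)=\{n: x_n\neq y_n\}$ to be \emph{finite} asks for homoclinic (doubly asymptotic) pairs, and there is no classical result guaranteeing those in every zero entropy infinite shift (they can fail to exist, e.g.\ in typical substitution subshifts). The classical fact, used in the paper, is weaker: every infinite shift has a pair with $x_n=y_n$ for all $n<M(x,y)$ and $x_{M}\neq y_{M}$; only $\min D$ is needed, so you should work with that notion. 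Second, and decisively, the step you defer --- producing the measure --- is the entire content of the theorem, and the specific route you sketch cannot work. A $\sigma$-invariant probability measure on (a compactification of) the pair space that does not charge the diagonal need not exist: for the sunny side up shift $S=\{x:\sum_n x_n\le 1\}$, the only shift-invariant measure on $S\times S$ is the point mass at the all-zeros pair, which sits on the diagonal, while the asymptotic pairs form wandering $\sigma$-orbits carrying no invariant mass. So no amount of ergodic decomposition of such a $\mu$ can rescue the argument, and the worry about whether the ergodic-component constant $k(\phi)$ is ``intrinsic'' is moot because the measure itself is unavailable.

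The paper's resolution is different in a way your sketch does not anticipate: it calibrates. One passes to the compact space $CA(\Sigma)$ of asymptotic pairs shifted so that $M(x,y)=0$; the shift then acts \emph{trivially} on $CA(\Sigma)$, and $\aut(\Sigma)$ acts by homeomorphisms via $\hat\varphi(x,y)=C(\varphi x,\varphi y)$ (apply $\varphi$, then recalibrate), with the drift cocycle bounded by the block-map memory of $\varphi$ and $\varphi^{-1}$. The zero entropy hypothesis enters not through softness or amenability but through a counting argument in the style of Frisch--Tamuz characteristic measures: since the number $|W_n|$ of word-pairs seen in centered windows of $CA(\Sigma)$ grows subexponentially, there is a sequence of window sizes along which all but a vanishing fraction of word-pairs extend \emph{uniquely} to a moderately larger window; uniform measures on representatives of these word-pairs then converge (weak*) to a genuinely $\aut(\Sigma)$-invariant measure $\nu$ on $CA(\Sigma)$, because on uniquely extendable word-pairs the action of $\hat\varphi$ is injective at the level of windows. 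Integrating the cocycle against $\nu$ gives the homomorphism. Without this (or some substitute construction of an $\aut(\Sigma)$-invariant measure), your proposal establishes only the formal cocycle framework, not the theorem.
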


We construct $\Phi$ by defining a action of $\aut(\Sigma)$ by homeomorphisms on a space $CA(\Sigma)$ of \emph{asymptotic pairs}. We show that this space admits a bounded ``drift'' cocycle $c \colon \aut(\Sigma) \times CA(\Sigma) \to \Z$ that satisfies $c(\sigma,\cdot) = 1$. Furthermore, using a technique introduced in \cite{frisch2021characteristic}, we show that $CA(\Sigma)$ also admits an $\aut(\Sigma)$-invariant probability measure $\nu$. The drift homomorphism $\Phi$ is defined as the expectation of $c$ with respect to $\nu$: $\Phi(\varphi) = \int c(\varphi,\cdot)\,\dd\nu$. 

This homomorphism has a similar flavor to those that stem from Krieger's dimension representation \cite{krieger1980dimension, boyle1988automorphism}. The construction of a homomorphism through the integration of a cocycle  with respect to an invariant measure is a technique that has yielded other interesting results in the past (see, e.g., Karlsson and Ledrappier~\cite{karlsson2006laws}).

The remainder of this paper contains definitions and a proof of Theorem~\ref{thm:main}. In \S\ref{sec:example} we provide some examples and further notes.

\medskip

\subsubsection*{Acknowledgment.} The author thanks Joshua Frisch and Ville Salo for valuable comments on an earlier version.

\section{The space of calibrated asymptotic pairs}
As is well known (see, e.g., \cite[Chapter 2]{auslander1988minimal}) every infinite shift $\Sigma$  admits at least one \emph{asymptotic pair}: $x,y \in \Sigma$ such that $x_M \neq y_M$ for some $M \in \Z$, and $x_n = y_n$ for all $n < M$. Accordingly, given an asymptotic pair, we denote by
\begin{align*}
    M(x,y) = \min \{m \in \Z \:\,: x_m \neq y_m\}
\end{align*}
the first coordinate in which $x$ and $y$ differ. Note that 
\begin{align}
    \label{eq:M}
    M(\sigma^k x, \sigma^k y) = M(x,y)+k.
\end{align}

Asymptotic pairs have been used to study automorphism groups of shifts: in  \cite{donoso2015automorphism} it is shown that $\aut(\Sigma)$ is virtually $\Z$ if $\Sigma$ is transitive and $\liminf_n P_\Sigma(n)/n$ is finite.

We say that an asymptotic pair is \emph{calibrated} if $M(x,y)=0$. If $(x,y)$ is an asymptotic pair then $(\sigma^m(x),\sigma^m(y))$ is an asymptotic pair, and it is calibrated if and only if $m = -M(x,y)$. We denote by
\begin{align*}
    C(x, y) = (\sigma^{-M(x,y)} x, \sigma^{-M(x,y)}y)
\end{align*}
the calibrated asymptotic pair that is attained from $(x,y)$ by shifting both of them so that they first differ at $0$. We denote by $CA(\Sigma)$ the set of calibrated asymptotic pairs in $\Sigma$. This definition is closely related to the \emph{asymptotic components} of \cite{donoso2016automorphism} and the \emph{asymptotic composants} of \cite{barge2001complete}. It is straightforward to see that $CA(\Sigma)$ is a closed subset of $\Sigma^2$, and is therefore compact. Note also that $C(x,y) \in CA(\Sigma)$ for every asymptotic pair $(x,y)$.

\section{An $\aut(\Sigma)$ action on the calibrated asymptotic pairs}
Let $\Sigma$ be an infinite shift. Then $\Sigma$ admits an asymptotic pair, and so $CA(\Sigma)$ is non-empty. We construct an $\aut(\Sigma)$ action on $CA(\Sigma)$. Given an automorphism $\varphi$ of $\Sigma$, define $\hat \varphi \colon CA(\Sigma) \to CA(\Sigma)$ by
\begin{align}
  \label{eq:ca-action}
    \hat \varphi(x,y) = C(\varphi x ,\varphi y).
\end{align}
That is, given a calibrated asymptotic pair $(x,y)$, $\hat\varphi$ applies $\varphi$ to both $x$ and $y$, and then shifts the resulting asymptotic pair so that it is again calibrated. The next few claims show that this is a well defined action by homeomorphisms. While $\hat\varphi$ is easily seen to be measurable, its continuity is less apparent. 

By the Curtis-Lyndon-Hedlund Theorem~\cite{hedlund1969endomorphisms}, for every $\varphi \in \aut(\Sigma)$ there is a \emph{memory} $k \in \N$ and a \emph{block map} $B_\varphi \colon A^{\{-k,\ldots,k\}} \to A$ such that $[\varphi x]_m = B_\varphi(x_{m-k},\ldots,x_{m+k})$. Importantly, $[\varphi x]_m$ is determined by $(x_{m-k},\ldots,x_{m+k})$. The following claim, which shows that $\varphi$ is well defined, is a direct consequence.
\begin{claim}
\label{clm:aut-inv}
If $(x,y)$ is an asymptotic pair in $\Sigma$ then so is $(\varphi x,\varphi y)$, for any $\varphi \in \aut(\Sigma)$.
\end{claim}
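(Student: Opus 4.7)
The plan is to leverage the block-map representation supplied by the Curtis--Lyndon--Hedlund theorem that was just invoked: any $\varphi \in \aut(\Sigma)$ has a finite memory $k$ and block map $B_\varphi$ so that $[\varphi z]_m$ depends only on $(z_{m-k}, \ldots, z_{m+k})$. Since $(x,y)$ is asymptotic, there is an integer $M = M(x,y)$ with $x_n = y_n$ for every $n < M$. I would first observe that for any $m < M-k$, the entire window $(x_{m-k},\ldots,x_{m+k})$ lies in coordinates strictly less than $M$, and therefore coincides coordinatewise with $(y_{m-k},\ldots,y_{m+k})$. Applying $B_\varphi$ to both tuples gives $[\varphi x]_m = [\varphi y]_m$, so $\varphi x$ and $\varphi y$ agree on the entire left tail $\{m : m < M-k\}$.

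It then remains to confirm that $\varphi x \neq \varphi y$ somewhere, so that the pair is genuinely asymptotic and not identical. This is immediate: $\varphi$ is a homeomorphism, hence injective, and $x \neq y$ because they differ at coordinate $M$. Therefore $\varphi x \neq \varphi y$, and combined with the previous paragraph this shows $(\varphi x, \varphi y)$ is an asymptotic pair, with $M(\varphi x, \varphi y) \ge M(x,y) - k$ for some finite $k$ depending on $\varphi$.

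There is no real obstacle here; the content of the claim is essentially the finite-range nature of cellular-automaton-type maps. If one wanted a slightly sharper statement, one could symmetrize: applying the same argument to $\varphi^{-1}$ (which also has finite memory by Curtis--Lyndon--Hedlund) yields a two-sided bound $|M(\varphi x,\varphi y) - M(x,y)| \le k'$ for a constant $k'$ depending only on $\varphi$. This uniform control will presumably be what later powers the boundedness of the drift cocycle $c$ promised in the introduction.
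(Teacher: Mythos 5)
Your argument is correct and is essentially identical to the paper's proof: both use the Curtis--Lyndon--Hedlund memory $k$ to get agreement of $[\varphi x]_m$ and $[\varphi y]_m$ for $m < M(x,y)-k$, and injectivity of $\varphi$ to get $\varphi x \neq \varphi y$. The symmetrized two-sided bound you mention is exactly the content of the paper's subsequent Claim~\ref{clm:m-bdd}, proved there the same way.
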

\begin{proof}
  Let $k$ be a memory of $\varphi$. Then $[\varphi x]_m = [\varphi y]_m$ for all $m < M(x,y)-k$, since $(x_{m-k},\ldots,x_{m+k}) = (y_{m-k},\ldots,y_{m+k})$ for such $m$. And $\varphi x \neq \varphi y$, since $\varphi$ is a bijection, and since $x \neq y$.
\end{proof}

The next claim offers a bound on the difference between $M(x,y)$ and $M(\varphi x,\varphi y)$. This will be the key component in the proof that $\hat\varphi$ is continuous.
\begin{claim}
\label{clm:m-bdd}
For every $\varphi \in \aut(\Sigma)$ there is a $B \in \N$ such that $|M(x,y)-M(\varphi x,\varphi y)| \leq B$ for every asymptotic pair $(x,y)$ in $\Sigma$.
\end{claim}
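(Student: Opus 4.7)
The plan is to apply Claim~\ref{clm:aut-inv} in both directions, using that the inverse of an automorphism is an automorphism, hence also has a Curtis-Hedlund-Lyndon memory.

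First I would fix a memory $k \in \N$ for $\varphi$ and a memory $k' \in \N$ for $\varphi^{-1}$, via the Curtis-Hedlund-Lyndon Theorem. Set $M = M(x,y)$. The argument in the proof of Claim~\ref{clm:aut-inv} shows directly that $[\varphi x]_m = [\varphi y]_m$ for every $m < M - k$, which gives the lower bound
\begin{align*}
    M(\varphi x, \varphi y) \geq M(x,y) - k.
\end{align*}

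For the matching upper bound, I would apply the very same observation to the automorphism $\varphi^{-1}$ and the asymptotic pair $(\varphi x, \varphi y)$ (which is asymptotic by Claim~\ref{clm:aut-inv}). This yields $[\varphi^{-1}(\varphi x)]_m = [\varphi^{-1}(\varphi y)]_m$ for every $m < M(\varphi x, \varphi y) - k'$, i.e.\ $x_m = y_m$ for all such $m$. By the definition of $M(x,y)$ this forces
\begin{align*}
    M(x,y) \geq M(\varphi x, \varphi y) - k'.
\end{align*}
Combining the two inequalities gives $|M(x,y) - M(\varphi x, \varphi y)| \leq \max(k,k')$, so one can take $B = \max(k,k')$, a constant depending only on $\varphi$ and not on the asymptotic pair.

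There is no real obstacle here; the only thing to be careful about is remembering that $\varphi^{-1} \in \aut(\Sigma)$ so that it too has a finite memory, which is what converts the one-sided bound of Claim~\ref{clm:aut-inv} into the two-sided bound needed. Note also that the argument never uses anything about $x,y$ other than that they form an asymptotic pair, so $B$ is genuinely uniform over $CA(\Sigma)$ (and in fact over all asymptotic pairs).
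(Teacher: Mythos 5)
Your proof is correct and follows essentially the same route as the paper: a memory $k$ for $\varphi$ gives $M(\varphi x,\varphi y) \geq M(x,y)-k$, and applying the same observation to $\varphi^{-1}$ and the pair $(\varphi x,\varphi y)$ gives the reverse bound, yielding $B=\max\{k,k'\}$. Nothing further is needed.
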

\begin{proof}
Let $k$ be a memory of $\varphi$. Then $[\varphi x]_m = [\varphi y]_m$ for all $m < M(x,y)-k$, as noted in the proof of Claim~\ref{clm:aut-inv} above. Hence 
\begin{align*}
M(\varphi x,\varphi y) \geq M(x,y)-k,    
\end{align*}
which provides one side of the desired inequality.

Now, let $k'$ be a memory of $\varphi^{-1}$. Then by the same argument applied to the pair $(\varphi x,\varphi y)$ and the automorphism $\varphi^{-1}$ we have that
\begin{align*}
 M(\varphi^{-1} \varphi x,\varphi^{-1} \varphi y) \geq M(\varphi x,\varphi y)-k',   
\end{align*}
which provides the other side of the inequality. Thus the claim holds for $B = \max\{k,k'\}$.
\end{proof}

\begin{proposition}
\label{prop:continuous}
Each map $\hat\varphi \colon CA(\Sigma) \to CA(\Sigma)$ is continuous.
\end{proposition}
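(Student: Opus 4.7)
The plan is to use the bound from Claim~\ref{clm:m-bdd} to partition $CA(\Sigma)$ into finitely many clopen pieces, on each of which $\hat\varphi$ is the restriction of an obviously continuous map.

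More concretely, fix $\varphi \in \aut(\Sigma)$ and let $B$ be the bound given by Claim~\ref{clm:m-bdd}. For any calibrated pair $(x,y)$ we have $M(x,y)=0$, so $M(\varphi x,\varphi y) \in \{-B,-B+1,\ldots,B\}$. Accordingly, for each integer $j$ with $|j|\leq B$, define
\begin{align*}
    U_j = \{(x,y) \in CA(\Sigma) : M(\varphi x,\varphi y) = j\},
\end{align*}
so that $CA(\Sigma)$ is the disjoint union of the finitely many sets $U_{-B},\ldots,U_B$. On $U_j$, by the definition of $C$ and of $\hat\varphi$, we have $\hat\varphi(x,y) = (\sigma^{-j}\varphi x,\sigma^{-j}\varphi y)$, which is the restriction to $U_j$ of a continuous self-map of $\Sigma^2$.

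The only thing left to check is that each $U_j$ is clopen in $CA(\Sigma)$: once we know this, $\hat\varphi$ is continuous on each piece of a clopen partition, hence continuous. The condition $M(\varphi x,\varphi y)=j$ amounts to the (a~priori infinite) conjunction
\begin{align*}
    [\varphi x]_m = [\varphi y]_m \text{ for every } m<j, \quad \text{together with} \quad [\varphi x]_j \neq [\varphi y]_j.
\end{align*}
This is the key point where Claim~\ref{clm:m-bdd} is needed: inside $CA(\Sigma)$ the equalities for $m<-B$ hold automatically, so within $CA(\Sigma)$ the condition reduces to the finite conjunction involving only $m \in \{-B,\ldots,j\}$. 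Since the coordinates of $\varphi x$ and $\varphi y$ in this finite range are continuous functions of $(x,y)$ (with values in the discrete set $A$), each $U_j$ is a clopen subset of $CA(\Sigma)$, completing the proof.

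I do not expect any real obstacle; the only subtlety is the observation that the bound from Claim~\ref{clm:m-bdd} is precisely what turns an infinite family of open conditions into a finite one, which is what allows the clopen partition argument to go through.
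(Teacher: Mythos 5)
Your proof is correct and follows essentially the same route as the paper: the paper shows directly that $M_\varphi(x,y)=M(\varphi x,\varphi y)$ is continuous by expressing it as a minimum over the finite range $\{-B,\dots,B\}$ supplied by Claim~\ref{clm:m-bdd}, which is exactly equivalent to your observation that the level sets $U_j$ are clopen. Your clopen-partition phrasing is just a repackaging of that same key step, so there is nothing to add.
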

\begin{proof}
Fix $\varphi \in \aut(\Sigma)$. By definition, $M(x,y)=0$ for $(x,y) \in CA(\Sigma)$. Thus, by Claim~\ref{clm:m-bdd}, there is some $B$ such that $|M(\varphi x,\varphi y)| \leq B$ for all $(x,y) \in CA(\Sigma)$. Thus the map $M_\varphi \colon CA(\Sigma) \to \Z$ given by  $M_\varphi(x,y) = M(\varphi x,\varphi y)$ takes values in $\{-B,\ldots,B\}$. Furthermore
\begin{align*}
    M_\varphi(x,y) = \min\{m \in \{-B,\ldots,B\}\,:\, [\varphi x]_m \neq [\varphi y]_m\},
\end{align*}
and so $M_\varphi$ is continuous, since $\varphi$ is continuous. Since
\begin{align*}
    \hat \varphi(x,y) = (\sigma^{-M_\varphi(x,y)}\varphi x,\sigma^{-M_\varphi(x,y)}\varphi y),
\end{align*}
and again using that $\varphi$ is continuous, it follows that $\hat\varphi$ is also continuous.
\end{proof}
The proof above in fact shows a stronger claim, which will be important later:
\begin{claim}
\label{clm:phi-extra-cont}
  For each $\varphi \in \aut(\Sigma)$ there is a $b \in \N$ such that the $m$th coordinates of $\hat\varphi(x,y)$ depend only on  $(x_{-m-b},\ldots,x_{m+b})$ and $(y_{-m-b},\ldots,y_{m+b})$.
\end{claim}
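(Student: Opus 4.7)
The plan is to retrace the computation of $\hat\varphi$ used in the proof of Proposition~\ref{prop:continuous} and keep careful track of the range of coordinates of $x$ and $y$ that each step actually queries. Reading the claim as asserting that every coordinate of $\hat\varphi(x,y)$ whose index lies in $\{-m,\ldots,m\}$ is determined by $(x_{-m-b},\ldots,x_{m+b})$ and $(y_{-m-b},\ldots,y_{m+b})$, two sources of dependence are at play. First, the Curtis--Lyndon--Hedlund block map of $\varphi$ has some memory $k$, so each entry $[\varphi x]_i$ depends only on $(x_{i-k},\ldots,x_{i+k})$. Second, the shift correction $-M_\varphi(x,y)$ applied in \eqref{eq:ca-action} can move indices by up to the uniform bound $B$ furnished by Claim~\ref{clm:m-bdd}. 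I would set $b = B + k$.

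To verify this choice, I would first observe, as already shown inside the proof of Proposition~\ref{prop:continuous}, that
\begin{align*}
    M_\varphi(x,y) = \min\{i \in \{-B,\ldots,B\} : [\varphi x]_i \neq [\varphi y]_i\},
\end{align*}
and since each $[\varphi x]_i$ and $[\varphi y]_i$ for $|i|\leq B$ depends on $x,y$ at indices in $\{-B-k,\ldots,B+k\} = \{-b,\ldots,b\}$, the value $M_\varphi(x,y)$ is itself determined by $(x_{-b},\ldots,x_b)$ and $(y_{-b},\ldots,y_b)$. Next, fixing any $j$ with $|j|\leq m$, the $j$th coordinates of the two components of
\begin{align*}
    \hat\varphi(x,y) = \bigl(\sigma^{-M_\varphi(x,y)}\varphi x,\ \sigma^{-M_\varphi(x,y)}\varphi y\bigr)
\end{align*}
are $[\varphi x]_{j+M_\varphi(x,y)}$ and $[\varphi y]_{j+M_\varphi(x,y)}$, whose indices lie in $\{-(m+B),\ldots,m+B\}$. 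One more application of the block map shows that these two values are determined by $x,y$ restricted to $\{-(m+B+k),\ldots,m+B+k\} = \{-m-b,\ldots,m+b\}$, and this window already contains $\{-b,\ldots,b\}$, which suffices to evaluate $M_\varphi(x,y)$ as well. Combining the two ingredients gives the claim.

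The hard part is not really hard: the argument is bookkeeping built on top of Proposition~\ref{prop:continuous}. The one substantive input is Claim~\ref{clm:m-bdd}, which supplies a bound on $|M_\varphi(x,y)|$ that is uniform in $(x,y)\in CA(\Sigma)$; without such a bound, the shift correction could in principle force dependence on entries of $x$ and $y$ arbitrarily far from the origin, and no single $b$ would suffice.
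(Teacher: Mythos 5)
Your argument is correct and is exactly the bookkeeping the paper has in mind: the paper gives no separate proof, asserting only that the proof of Proposition~\ref{prop:continuous} already shows this, and your choice $b = B + k$ (memory of the block map plus the uniform bound on $M_\varphi$ from Claim~\ref{clm:m-bdd}) spells out that implicit dependence-tracking faithfully. Nothing is missing.
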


Finally, the next claim completes the proof that we have defined an action of $\aut(\Sigma)$ on $CA(\Sigma)$.
\begin{claim}
\label{clm:homomorph}
The map $\varphi \mapsto \hat\varphi$ is a group homomorphism from $\aut(\Sigma)$ to $\hom(CA(\Sigma))$.
\end{claim}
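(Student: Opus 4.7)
The plan is to verify the three things a group homomorphism into $\hom(CA(\Sigma))$ requires: first, that $\widehat{\mathrm{id}}$ is the identity on $CA(\Sigma)$; second, that $\widehat{\varphi\psi} = \hat\varphi \circ \hat\psi$ for all $\varphi,\psi \in \aut(\Sigma)$; and third, that each $\hat\varphi$ is actually a homeomorphism. Continuity of $\hat\varphi$ is already supplied by Proposition~\ref{prop:continuous}, so the main content is the composition identity, from which the bijectivity of each $\hat\varphi$ follows for free.

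The identity case is immediate: if $(x,y) \in CA(\Sigma)$ then $M(x,y)=0$, so $\widehat{\mathrm{id}}(x,y)=(x,y)$. For composition, the key observation I would isolate first is that $C$ is invariant under the diagonal shift action, i.e.\ $C(\sigma^k u, \sigma^k v) = C(u,v)$ for any asymptotic pair $(u,v)$ and any $k \in \Z$. This follows directly from~\eqref{eq:M}, since shifting both coordinates by $k$ increases $M$ by $k$ and hence leaves $(\sigma^{-M}\,\cdot,\sigma^{-M}\,\cdot)$ unchanged.

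Given that, for $(x,y) \in CA(\Sigma)$, I would compute, writing $m = M(\psi x,\psi y)$,
\begin{align*}
  \hat\varphi\bigl(\hat\psi(x,y)\bigr)
    &= C\bigl(\varphi \sigma^{-m}\psi x,\;\varphi \sigma^{-m}\psi y\bigr) \\
    &= C\bigl(\sigma^{-m}\varphi\psi x,\;\sigma^{-m}\varphi\psi y\bigr) \\
    &= C(\varphi\psi x,\;\varphi\psi y) \;=\; \widehat{\varphi\psi}(x,y),
\end{align*}
where the second equality uses that $\varphi$ commutes with $\sigma$ and the third uses the shift-invariance of $C$ noted above. This proves $\widehat{\varphi\psi} = \hat\varphi\circ\hat\psi$.

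Bijectivity of $\hat\varphi$ is then a corollary: applying the identity just proved to the pairs $\varphi,\varphi^{-1}$ and $\varphi^{-1},\varphi$ yields $\hat\varphi\circ\hat{\varphi^{-1}} = \hat{\varphi^{-1}}\circ\hat\varphi = \widehat{\mathrm{id}} = \mathrm{id}_{CA(\Sigma)}$, so $\hat\varphi$ is invertible with inverse $\hat{\varphi^{-1}}$, which is continuous by Proposition~\ref{prop:continuous} applied to $\varphi^{-1}$. I do not anticipate a real obstacle here; the only subtlety is recognizing that the shift $\sigma^{-m}$ appearing inside $\hat\psi(x,y)$ can be pushed past $\varphi$ (using commutation) and then harmlessly absorbed by $C$ (using shift-invariance).
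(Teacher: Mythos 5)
Your proof is correct and follows essentially the same route as the paper: both establish $\widehat{\varphi\psi}=\hat\varphi\circ\hat\psi$ by pushing the calibrating shift past $\varphi$ (using that automorphisms commute with $\sigma$) and then absorbing it via~\eqref{eq:M}, which is exactly your shift-invariance of $C$. Your explicit derivation of invertibility of $\hat\varphi$ from the composition law (with inverse $\widehat{\varphi^{-1}}$) is a welcome extra bit of care, but it does not change the substance of the argument.
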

\begin{proof}
By Proposition~\ref{prop:continuous}, each $\hat\varphi$ is a homeomorphism of the compact space $CA(\Sigma)$. It thus suffices to prove that $\hat \rho \hat \varphi = \widehat{\rho\varphi}$ for all $\rho,\varphi \in \aut(\Sigma)$.

By definition
\begin{align*}
    \hat \rho\hat \varphi(x,y) 
    &= \hat\rho(\sigma^{m} \varphi x, \sigma^{m}\varphi y)
\end{align*}
where $m = -M(\varphi x,\varphi y)$. Applying the definition again we get
\begin{align*}
    \hat \rho\hat \varphi(x,y) 
    &= (\sigma^n\rho\sigma^{m} \varphi x, \sigma^n\rho\sigma^{m}\varphi y),
\end{align*}
where $n = -M(\rho \sigma^{m} \varphi x, \rho \sigma^{m} \varphi y)$. Since $\rho$ commutes with $\sigma$, $n = -M(\sigma^{m}\rho \varphi x, \sigma^{m}\rho \varphi y)$, and by \eqref{eq:M}, $n = -M(\rho \varphi x, \rho \varphi y)-m$. Thus, and by again using the fact that $\sigma$ and $\rho$ commute, 
\begin{align*}
    \hat \rho\hat \varphi(x,y) 
    &= (\sigma^{n+m}\rho \varphi x, \sigma^{n+m}\rho\varphi y)\\
    &= (\sigma^{-M(\rho \varphi x, \rho \varphi y)}\rho \varphi x, \sigma^{-M(\rho \varphi x, \rho \varphi y)}\rho \varphi y)\\
    &= \widehat{\rho\varphi}(x,y).
\end{align*}
\end{proof}

\section{The drift cocycle and drift homomorphisms}

Define the \emph{drift cocycle} $c \colon \aut(\Sigma) \times CA(\Sigma) \to \Z$ by
\begin{align*}
    c(\varphi, (x,y)) = M(\varphi x,\varphi y).
\end{align*}
In a sense, $c(\varphi,(x,y))$ captures the amount by which $\varphi$ shifts the asymptotic pair $(x,y)$. In particular, by \eqref{eq:M}, $c(\sigma,(x,y))=1$ for all $(x,y) \in CA(\Sigma)$.
\begin{claim}
\label{clm:c-cont-cocycle}
The drift cocycle $c$ is continuous and satisfies the cocycle relation
\begin{align*}
    c(\varphi \rho, (x,y)) = c(\varphi, \hat\rho(x,y)) + c(\hat\rho, (x,y)).
\end{align*}
\end{claim}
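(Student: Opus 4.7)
The plan is to handle the two assertions in the claim separately. The first claim---continuity of $c$---is essentially already established in the proof of Proposition~\ref{prop:continuous}: for each fixed $\varphi\in\aut(\Sigma)$, the function $c(\varphi,\cdot)\colon CA(\Sigma)\to\Z$ coincides with the map $M_\varphi$ defined there, which was shown to take values in the finite set $\{-B,\ldots,B\}$ (with $B$ from Claim~\ref{clm:m-bdd}) and to be continuous---in fact locally constant---since each of the clopen conditions $[\varphi x]_m\neq[\varphi y]_m$ depends continuously on $(x,y)$. So I would simply cite that argument.

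For the cocycle identity, I would expand the right-hand side using the definitions. Setting $m=M(\rho x,\rho y)$, by construction $\hat\rho(x,y)=(\sigma^{-m}\rho x,\sigma^{-m}\rho y)$, so
\begin{align*}
    c(\varphi,\hat\rho(x,y)) = M(\varphi\sigma^{-m}\rho x,\,\varphi\sigma^{-m}\rho y).
\end{align*}
Pulling the shift past $\varphi$ using $\varphi\sigma=\sigma\varphi$ and then applying \eqref{eq:M}, the right-hand side equals $M(\varphi\rho x,\varphi\rho y)-m = c(\varphi\rho,(x,y))-c(\rho,(x,y))$. Rearranging yields the desired identity, with the understanding that $c(\hat\rho,\cdot)$ is to be read as $c(\rho,\cdot)$ (consistent with the fact that $\varphi\mapsto\hat\varphi$ is the homomorphism governing the action, by Claim~\ref{clm:homomorph}).

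No substantive obstacle seems to arise: both parts reduce to a direct application of $\sigma\varphi=\varphi\sigma$ and \eqref{eq:M}, plus the continuity information already extracted in the proof of Proposition~\ref{prop:continuous}. The only thing to be careful about is the bookkeeping of shifts when pulling $\sigma^{-m}$ through $\varphi$, and keeping track of whether a given term contributes $+m$ or $-m$.
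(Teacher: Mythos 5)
Your proposal is correct and follows essentially the same route as the paper: continuity is read off from the map $M_\varphi$ in the proof of Proposition~\ref{prop:continuous}, and the cocycle identity is obtained by expanding $c(\varphi,\hat\rho(x,y))$ via the definition of $\hat\rho$, commuting $\sigma^{-m}$ past $\varphi$, and applying \eqref{eq:M}. Your remark that $c(\hat\rho,\cdot)$ in the statement should be read as $c(\rho,\cdot)$ is also the intended reading.
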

\begin{proof}
In the proof of Proposition~\ref{prop:continuous} we defined $M_\varphi(x,y) = M(\varphi x,\varphi y) = c(\varphi,(x,y))$ and showed that it is continuous, and thus $c$ is continuous. It remains to be shown that it satisfies the cocycle relation. 

By definition,
\begin{align*}
    c(\varphi, \rho(x,y)) 
    &= c(\varphi, C(\rho x ,\rho y ))\\
    &= c(\varphi, (\sigma^{-M(\rho x ,\rho y )}\rho x , \sigma^{-M(\rho x ,\rho y )}\rho y ))\\
    &= M(\varphi \sigma^{-M(\rho x ,\rho y )}\rho x , \varphi\sigma^{-M(\rho x ,\rho y )}\rho y ).
\end{align*}
Since $\varphi$ commutes with $\sigma$, we have that 
\begin{align*}
     c(\varphi, \rho(x,y)) = M( \sigma^{-M(\rho x ,\rho y )}\varphi\rho x , \sigma^{-M(\rho x ,\rho y )}\varphi\rho y ).
\end{align*}
Applying \eqref{eq:ca-action} yields
\begin{align*}
    c(\varphi, \rho(x,y)) = M(\varphi\rho x,\varphi\rho y) - M(\rho x,\rho y),
\end{align*}
which is equal to $c(\varphi\rho,(x,y))-x(\rho,(x,y))$.
\end{proof}

Since $M(x,y)=0$ for all $(x,y) \in CA(\Sigma)$, by Claim~\ref{clm:m-bdd} $c$ is a bounded cocycle:
\begin{claim}
\label{clm:bdd-cocycle}
  For each $\varphi \in \aut(\Sigma)$ there exists a $B \in \N$ such that $|c(\varphi,(x,y))| \leq B$ for all $(x,y) \in CA(\Sigma)$.
\end{claim}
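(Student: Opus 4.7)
The plan is to derive this directly as an immediate corollary of Claim~\ref{clm:m-bdd}, which already bounds $|M(x,y)-M(\varphi x,\varphi y)|$ uniformly over all asymptotic pairs. The only input I need beyond that claim is the defining property of $CA(\Sigma)$.

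First I would unpack the definition: by construction of the drift cocycle, $c(\varphi,(x,y)) = M(\varphi x, \varphi y)$. Next, I would use that $(x,y) \in CA(\Sigma)$ means exactly that $M(x,y) = 0$. Then Claim~\ref{clm:m-bdd} supplies a $B \in \N$ (depending only on $\varphi$, specifically on the memories of $\varphi$ and $\varphi^{-1}$) such that $|M(x,y) - M(\varphi x, \varphi y)| \leq B$ for every asymptotic pair in $\Sigma$, in particular for every calibrated one. Substituting $M(x,y)=0$ gives $|M(\varphi x, \varphi y)| \leq B$, which is precisely $|c(\varphi,(x,y))| \leq B$.

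There is no real obstacle here; the lemma is essentially a restatement of Claim~\ref{clm:m-bdd} under the normalization $M(x,y)=0$ built into the definition of $CA(\Sigma)$. The proof is a single line invoking that claim.
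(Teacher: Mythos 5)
Your proof is correct and matches the paper's own argument: the paper derives this claim in exactly the same way, noting that $M(x,y)=0$ on $CA(\Sigma)$ and applying Claim~\ref{clm:m-bdd} to bound $|c(\varphi,(x,y))| = |M(\varphi x,\varphi y)| \leq B$.
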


Suppose $\nu$ is a Borel probability measure on $CA(\Sigma)$ that is $\aut(\Sigma)$-invariant, i.e., $\nu(\hat\varphi(A)) = \nu(A)$ for every $\varphi \in \aut(\Sigma)$ and Borel $A \subset CA(\Sigma)$. Let $\Phi_\nu \colon \aut(\Sigma) \to \R$ be given by
\begin{align*}
    \Phi_\nu(\varphi) = \int c(\varphi,(x,y))\,\dd\nu(x,y).
\end{align*}
We call $\Phi_\nu$ a \emph{drift} homomorphism.
\begin{claim}
  $\Phi_\nu$ is a well defined homomorphism $\aut(\Sigma) \to \R$, with $\Phi_\nu(\sigma)=1$.
\end{claim}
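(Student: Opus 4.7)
The plan is to verify three things: well-definedness of the integral, the value $\Phi_\nu(\sigma) = 1$, and the homomorphism property. The first two are essentially immediate from the preceding claims; the homomorphism property is the substantive step, and it follows by combining the cocycle relation from Claim~\ref{clm:c-cont-cocycle} with the $\aut(\Sigma)$-invariance of $\nu$ via a change of variables.

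For well-definedness, Claim~\ref{clm:c-cont-cocycle} shows that for each fixed $\varphi$ the map $(x,y) \mapsto c(\varphi,(x,y))$ is continuous, hence Borel measurable, and Claim~\ref{clm:bdd-cocycle} shows it is bounded. Since $\nu$ is a probability measure on the compact space $CA(\Sigma)$, the integral $\int c(\varphi,(x,y))\,\dd\nu$ is a well-defined real number. For $\varphi=\sigma$, identity~\eqref{eq:M} gives $c(\sigma,(x,y)) = M(\sigma x,\sigma y) = M(x,y)+1 = 1$ for every $(x,y) \in CA(\Sigma)$, so $\Phi_\nu(\sigma) = \int 1\,\dd\nu = 1$.

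For the homomorphism property, fix $\varphi,\rho \in \aut(\Sigma)$ and apply the cocycle relation to obtain
\begin{align*}
c(\varphi\rho,(x,y)) = c(\varphi,\hat\rho(x,y)) + c(\rho,(x,y)).
\end{align*}
Integrating both sides against $\nu$ and using linearity of the integral,
\begin{align*}
\Phi_\nu(\varphi\rho) = \int c(\varphi,\hat\rho(x,y))\,\dd\nu(x,y) + \Phi_\nu(\rho).
\end{align*}
The crucial step is a change of variables in the first integral: the hypothesis that $\nu$ is $\aut(\Sigma)$-invariant is precisely the statement that the pushforward $\hat\rho_\ast \nu$ equals $\nu$, so
\begin{align*}
\int c(\varphi,\hat\rho(x,y))\,\dd\nu(x,y) = \int c(\varphi,(x,y))\,\dd\nu(x,y) = \Phi_\nu(\varphi).
\end{align*}
Combining these identities gives $\Phi_\nu(\varphi\rho) = \Phi_\nu(\varphi) + \Phi_\nu(\rho)$.

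There is no real obstacle in this proof; once the cocycle relation, the boundedness of $c$, and the invariance of $\nu$ are all in hand, it is a short verification that the pieces fit together, with the $\hat\rho$-invariance of $\nu$ doing the essential work to convert the cocycle identity on $CA(\Sigma)$ into an additive identity for the integrated quantity on $\aut(\Sigma)$.
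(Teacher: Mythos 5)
Your proof is correct and follows essentially the same route as the paper: integrability from continuity (Claim~\ref{clm:c-cont-cocycle}) plus boundedness (Claim~\ref{clm:bdd-cocycle}), the value at $\sigma$ from \eqref{eq:M}, and the homomorphism property by integrating the cocycle relation and using $\hat\rho_\ast\nu = \nu$ to remove the $\hat\rho$ in the first integral. The only difference is cosmetic: you phrase the invariance step explicitly as a pushforward/change-of-variables identity, which is exactly what the paper's appeal to ``the invariance of $\nu$'' means.
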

\begin{proof}
  By Claim~\ref{clm:c-cont-cocycle} $c(\varphi,\cdot)$ is continuous and thus measurable. By Claim~\ref{clm:bdd-cocycle} it is bounded. Hence it is integrable, and $\Phi_\nu(\varphi)$ is well defined. To show that it is a homomorphism we first apply the cocycle relation, and then the invariance of $\nu$
  \begin{align*}
      \Phi_\nu(\rho \varphi) 
      &= \int c( \varphi\rho,(x,y))\,\dd\nu(x,y)\\ 
      &= \int c(\varphi,\hat\rho(x,y))\,\dd\nu(x,y)+\int c(\rho,(x,y))\,\dd\nu(x,y)\\
      &= \int c(\varphi,(x,y))\,\dd\nu(x,y)+\int c(\rho,(x,y))\,\dd\nu(x,y)\\
      &= \Phi_\nu(\varphi) + \Phi_\nu(\rho).
  \end{align*}
  Finally, $\Phi_\nu(\sigma)=1$, since $c(\sigma,(x,y))=1$ for all $(x,y)$.
\end{proof}

In light of this claim, we prove our main theorem by showing that if $\Sigma$ is zero entropy shift with an asymptotic point then $CA(\Sigma)$ admits an $\aut(\Sigma)$-invariant measure. This is what we do in the next section.

\section{$\aut(\Sigma)$-invariant random calibrated asymptotic pairs}

In this section we construct, for each zero entropy shift $\Sigma$, a Borel probability measure on $CA(\Sigma)$ that is $\aut(\Sigma)$-invariant. This construction is nearly identical to that of the main result in \cite{frisch2021characteristic}; we provide the details for completeness.

Let $W_n \subseteq A^{2n+1} \times A^{2n+1}$ denote the set of \emph{word-pairs} that appear in the centered window of radius $n$ in $CA(\Sigma)$. That is, $W_n$ is the set of  $(w_1,w_2) \in A^{2n+1} \times A^{2n+1}$ such that $w_1=(x_{-n},\ldots,x_n)$ and $w_2 = (y_{-n},\ldots,y_n)$ for some $(x,y)\in CA(\Sigma)$. We say that $(x,y)$ \emph{projects} to $(w_1,w_2) \in W_n$ if $(w_1,w_2)$ appears in $(x,y)$, and denote $\pi_n(x,y) = (w_1,w_2)$, $\pi_n \colon CA(\Sigma) \to W_n$. As each $(w_1,w_2) \in W_n$ appears in some pair $(x,y) \in CA(\Sigma)$, we can find a set $\bar W_n \subseteq CA(\Sigma)$ of the same size as $W_n$, and where each $(w_1,w_2) \in W_n$ appears in exactly one $(x,y) \in \bar W_n$. I.e., $\pi_n$ restricted to $\bar W_n$ is a bijection.

Since $|W_n| \leq (P_\Sigma(2n+1))^2$, and since $\Sigma$ has zero entropy, $|W_n|$ grows sub-exponentially, and so there is a sequence $(n_m)_m$ such that
\begin{align*}
    \frac{|W_{n_m+m}|}{|W_{n_m}|} \leq 1+o_m(1).
\end{align*}
That is, along the sequence $n_m$, the number of word-pairs in a window of width $n_m+m$ is only a small fraction more than in a window of length $n_m$. It follows that, along this sequence,  at least a $1-o(1)$ fraction of the word-pairs in $W_{n_m}$ have a unique extension to a word-pair in $W_{n_m+m}$. Denote such a sequence of sets by $U_m$. 

Let $\nu_m$ be the uniform measure on $\bar W_{n_m}$, which, we remind the reader, is a subset of $CA(\Sigma)$ for which the projection to the words $W_{n_m}$ is a bijection. Since $CA(\Sigma)$ is compact, the sequence $\nu_m$ has a subsequential limit $\nu$.
\begin{proposition}
\label{prop:invariant}
The measure $\nu$ on $CA(\Sigma)$ is $\aut(\Sigma)$-invariant.
\end{proposition}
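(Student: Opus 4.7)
The plan is to show $\int f\,\dd\nu = \int f\circ\hat\varphi\,\dd\nu$ for every continuous $f\colon CA(\Sigma)\to\R$ and every $\varphi\in\aut(\Sigma)$. Cylinder functions are dense in $C(CA(\Sigma))$, so I may assume $f = g\circ\pi_N$ for some $N\in\N$ and bounded $g\colon W_N\to\R$. Since $\nu$ is a weak-$*$ limit of $\nu_m$ along a subsequence and both $f$ and $f\circ\hat\varphi$ are continuous, it suffices to prove that $\int f\,\dd\nu_m - \int f\circ\hat\varphi\,\dd\nu_m \to 0$ along that subsequence.

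Fix $\varphi$ and let $B\in\N$ be a common parameter from Claim~\ref{clm:phi-extra-cont} that works for both $\hat\varphi$ and $\widehat{\varphi^{-1}}$. For $m$ large (so $m\geq 2B$ and $N+B\leq n_m$), each $u\in U_m$ has a unique extension $u^+\in W_{n_m+2B}$. For any $(x,y)\in CA(\Sigma)$ with $\pi_{n_m+2B}(x,y)=u^+$, Claim~\ref{clm:phi-extra-cont} applied to $\varphi$ makes $T(u) := \pi_{n_m+B}(\hat\varphi(x,y)) \in W_{n_m+B}$ depend only on $u$. Claim~\ref{clm:phi-extra-cont} applied to $\varphi^{-1}$ recovers $u=\pi_{n_m}(\widehat{\varphi^{-1}}\hat\varphi(x,y))$ from $T(u)$, so $T\colon U_m\to W_{n_m+B}$ is injective. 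The restriction maps $W_{n_m+m}\to W_{n_m+B}\to W_{n_m}$ are surjective, hence $|W_{n_m}|\leq|W_{n_m+B}|\leq|W_{n_m+m}|\leq(1+o(1))|W_{n_m}|$, so $|W_{n_m+B}|=(1+o(1))|W_{n_m}|$; combined with $|U_m|=(1-o(1))|W_{n_m}|$ this gives $|W_{n_m+B}\setminus T(U_m)|=o(|W_{n_m}|)$.

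Expand each integral as an average over $\bar W_{n_m}$ using the bijection $\pi_{n_m}|_{\bar W_{n_m}}\colon\bar W_{n_m}\to W_{n_m}$. For $u\in U_m$ with representative $(x,y)\in\bar W_{n_m}$, one has $f(x,y)=g(u|_N)$ and $f(\hat\varphi(x,y))=g(T(u)|_N)$; the $u\notin U_m$ terms contribute at most $o(|W_{n_m}|)\cdot\|g\|_\infty$ to each side. Grouping by $w\in W_N$, let $\alpha_w,\beta_w,\gamma_w$ denote the number of elements of $W_{n_m}$, of $W_{n_m+B}$, and of $T(U_m)$ whose central $N$-window equals $w$ (using injectivity of $T$ for $\gamma_w$). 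Then $0\leq\beta_w-\alpha_w$ and $0\leq\beta_w-\gamma_w$, with $\sum_w(\beta_w-\alpha_w)=|W_{n_m+B}|-|W_{n_m}|$ and $\sum_w(\beta_w-\gamma_w)=|W_{n_m+B}|-|T(U_m)|$, both $o(|W_{n_m}|)$. Hence $\bigl|\sum_w g(w)(\alpha_w-\gamma_w)\bigr|\leq o(|W_{n_m}|)\cdot\|g\|_\infty$, and dividing by $|W_{n_m}|$ yields the required $o(1)$ bound.

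The main obstacle is the bookkeeping with the three nested windows $n_m$, $n_m+B$, $n_m+2B$ and aligning the counts across them via the near-bijection $T$. Once the locality from Claim~\ref{clm:phi-extra-cont} is used to extract $T$, the remainder is a short pigeonhole argument that relies crucially on the subexponential growth baked into the choice of $(n_m)$.
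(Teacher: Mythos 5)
Your proof is correct and follows essentially the same route as the paper: a weak-$*$ limit argument over the uniform measures on $\bar W_{n_m}$, using the unique-extension sets $U_m$ together with the locality of Claim~\ref{clm:phi-extra-cont} to transport word-pair counts under $\hat\varphi$, with the error controlled by the subexponential growth built into the choice of $(n_m)$. The differences are purely presentational—you test against cylinder functions rather than clopen sets, and you package the paper's ``key observation'' as the explicit injection $T\colon U_m\to W_{n_m+B}$, which is a clean way to run the same count.
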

\begin{proof}

Since $\nu$ is defined on the Borel sigma-algebra, to show that it is invariant it suffices to show that $\nu(\hat\varphi^{-1} \bar E) = \nu(\bar E)$ for every $\varphi \in \aut(\Sigma)$ and every clopen $\bar E \subseteq CA(\Sigma)$.

Since $\bar E$ is clopen, for all $m$ large enough there is a set $E_m \subset W_{n_m}$ such that $\bar E$ is the set of all $(x,y) \in CA(\Sigma)$ that project to some $(w_1,w_2) \in E_m$. That is, $\bar E = \pi_{n_m}^{-1}(E_m)$. Hence $|\bar E \cap \bar W_{n_m}| = |E_m \cap W_{n_m}| = |E_m|$, where the last equality holds since $E_m \subseteq W_{n_m}$. It follows that 
\begin{align*}
    \nu_m(\bar E) = \frac{|\bar E \cap \bar W_{n_m}|}{|\bar W_{n_m}|}
    =\frac{|E_m \cap W_{n_m}|}{|W_{n_m}|}
    =\frac{|E_m|}{|W_{n_m}|},
\end{align*}
where the first equality is the definition of $\nu_m$. Likewise,
\begin{align*}
    \nu_m(\hat\varphi^{-1}\bar E) 
    = \frac{|(\hat\varphi^{-1}\bar E) \cap \bar W_{n_m}|}{|\bar W_{n_m}|}
    = \frac{|\bar E \cap \hat\varphi\bar W_{n_m}|}{|\bar W_{n_m}|}.
\end{align*}
Now, $\bar E \cap \hat\varphi\bar W_{n_m}$ is the set of elements of $\hat\varphi\bar W_{n_m}$ that are in $\bar E$. Since $\bar E = \pi_{n_m}^{-1}(E_m)$, The size of this set is equal to the number of elements $(x,y) \in \hat\varphi\bar W_{n_m}$ such that $\pi_{n_m}(x,y)$ is in $E_m$:
\begin{align*}
    \nu_m(\hat\varphi^{-1}\bar E) 
    = \frac{|\{(x,y) \in \hat\varphi\bar W_{n_m}\,:\, \pi_{n_m}(x,y) \in E_m\}|}{|\bar W_{n_m}|}.
\end{align*}
By Claim~\ref{clm:phi-extra-cont} there is some $b$ such that for all $n$, coordinates $\{-n,\ldots,n\}$ of both $\hat\varphi(x,y)$ and $\hat\varphi^{-1}(x,y)$ are determined by coordinates $(-n-b,\ldots,n+b)$ of $(x,y)$. For $m > b$, the set of word-pairs $U_m \subseteq W_{n_m}$ that have a unique extension to $W_{n_m+m}$ is at least a $1-o(1)$ fraction of the word-pairs in $W_{n_m}$. Let $\bar U_m \subset \bar W_{n_m}$ be the subset of  $\bar W_{n_m}$ that projects to $U_m$. Since $\bar U_m$ is almost all of $\bar W_{n_m}$, we can substitute it for $\bar W_{n_m}$ and only incur a vanishing error:
\begin{align*}
    \nu_m(\hat\varphi^{-1}\bar E) 
    = \frac{|\{(x,y) \in \hat\varphi\bar U_m\,:\, \pi_{n_m}(x,y) \in E_m\}|}{|\bar W_{n_m}|} + o(1).
\end{align*}

The key observation is that because of the unique extension property, for $(w_1,w_2) \in U_m$ there is a $(w_1',w_2')$ such that  $(x,y)$ projects to $(w_1,w_2)$ if and only if $\hat\varphi(x,y)$ projects to $(w_1',w_2')$. This holds because these projections are determined by $(x_{-n_m-b},\ldots,x_{n_m+b})$, which by the unique extension property is determined by $(x_{-n_m},\ldots,x_{n_m})$. Hence the restriction of $\pi_{n_m}$ to $\hat\varphi\bar U_m$ is injective.
Hence
\begin{align*}
    \nu_m(\hat\varphi^{-1}\bar E) 
    = \frac{|E_m \cap \pi_{n_m}(\hat\varphi\bar U_m)|}{|W_{n_m}|} + o(1).
\end{align*}
Since $\bar U_m$ includes $1-o(1)$ of the elements of $\bar W_{m_n}$, and since $\pi_{n_m} \circ \hat\varphi$ is injective on it, we can replace $\bar U_m$ by $W_{n_m}$ while again only incurring an additional vanishing error:
\begin{align*}
    \nu_m(\hat\varphi^{-1}\bar E) 
    = \frac{|E_m \cap W_{n_m}|}{|W_{n_m}|} + o(1).
\end{align*}
Finally, $E_m$ is a subset of $W_{n_m}$, and so 
\begin{align*}
    \nu_m(\hat\varphi^{-1}\bar E) 
    = \frac{|E_m|}{|W_{n_m}|} + o(1) = \nu_m(\bar E)+o(1).
\end{align*}
By taking the limit $m \to \infty$ it follows that $\nu(\hat\varphi^{-1}\bar E)=  \nu(\bar E)$, and so $\nu$ is $\hat \varphi$-invariant.
\end{proof}

\section{Proof of Theorem~\ref{thm:main}}
The proof of Theorem~\ref{thm:main} is an immediate consequence of Claim~\ref{clm:homomorph} and Proposition~\ref{prop:invariant}: By Proposition~\ref{prop:invariant}, there is an $\aut(\Sigma)$-invariant probability measure on $CA(\Sigma)$, provided that $CA(\Sigma)$ is non-empty, which holds if $\Sigma$ no periodic points, and provided that $\Sigma$ is zero entropy. By Claim~\ref{clm:homomorph}, there exists an associated drift homomorphism $\Phi_\nu$ satisfies $\Phi_\nu(\sigma)=1$.

\section{Examples and further notes}
\label{sec:example}
The author would like to thank Joshua Frisch and Ville Salo for drawing his attention to the following examples.

As an example of an asymptotic pair, let $S \subset \{0,1\}^\Z$ be the \emph{sunny side up} shift: $S = \{x \in \{0,1\}^\Z \,:\, \sum_n x_n \leq 1\}$. That is, $S$ is the $\sigma$-orbit closure of $\bar x$, where $\bar x_0=1$ and $\bar x_n=0$ for $n \neq 0$. Let $\bar z_n=0$ for all $n$. Then $(\bar x, \bar z)$ is a calibrated asymptotic pair in $S$. In fact, there is only one more: $(\bar z,\bar x)$.

The \emph{topological full group} of $\Sigma$ is the set of homeomorphisms $\phi$ of $\Sigma$ for which there exists a continuous \emph{orbit cocycle} $N_\phi \colon \Sigma \to \Z$ for which $\phi(x) = \sigma^{N_\phi(x)}(x)$ (see, e.g., Katzlinger's survey~\cite{katzlinger2019topological}). Topological full groups admit drift homomorphisms that closely resemble our construction: they arise as expectations  of the orbit cocycle $N_\phi$ with respect to a shift-invariant measure on $\Sigma$.

Indeed, the relation to this paper can be made more explicit. Given any shift $\Sigma$, let $\Sigma' = \Sigma \times S$. The topological full group embeds as a subgroup of $\aut(\Sigma')$: given an element of the full group $\phi$, define $\bar \phi \in \aut(\Sigma')$ as follows. An element of $\Sigma'$ is either of the form $(y,\sigma^m\bar x)$ for $m \in \Z$ or of the form $(y, \bar z)$. In the latter case let $\bar \phi(y,\bar z) = (y,\bar z)$. In the former case let
\begin{align*}
    \bar \phi(y,\sigma^m \bar x) = (y, \sigma^{m+N_\phi(\sigma^{-m} y)} \bar x).
\end{align*}
Consider calibrated asymptotic pairs of the form $((y,\bar x), (y,\bar z)) \in CA(\Sigma')$. On such pairs, the drift cocycle $c$ equals to $N_\phi$. Hence, if we choose $y$ at random according to a shift-invariant probability measure on $\Sigma$, the expectation of the drift cocycle $c(\phi, ((y,\bar x), (y,\bar z)))$ will equal the expectation of the orbit cocycle $N_\phi(y)$.

\medskip 

Yet another similar construction of a drift homomorphism (which we will not explain in detail) is the ``average movement'' of Turing machines of shifts, as defined by Barbieri, Kari and Salo~\cite{barbieri2016group}; this is a generalization of the drift homomorphisms of full groups.

\bibliography{refs}

\end{document}